 \newtheorem{theorem}{Theorem}[section]
 \newtheorem{cor}[theorem]{Corollary}
 \newtheorem{lemma}[theorem]{Lemma}
 \theoremstyle{definition}
 \newtheorem{remark}[theorem]{Remark}
\numberwithin{equation}{section}
\DeclareMathOperator{\supp}{supp}
\begin{document}

\title[additive bases of abelian groups of rank 2]
{additive bases of abelian groups of rank 2}

\author[W.D. Gao]{Weidong Gao}
\address{Center for Applied Mathematics \\ Tianjin University\\
Tianjin, 300072, P.R. China}
\email{wdgao@nankai.edu.cn, weidong.gao@tju.edu.cn, wdgao1963@aliyun.com}

\author[Y.L. Li]{Yuanlin Li}
\address{Department of Mathematics and Statistics\\  Brock University\\ St. Catharines, ON L2S 3A1, Canada}
\email{yli@brocku.ca}

\author[Y.K. Qu]{Yongke Qu}
\address{Department of Mathematics\\ Luoyang Normal University\\
LuoYang 471934, P.R. China}
\email{yongke1239@163.com}

\author[Q.H. Wang]{Qinghong Wang}
\address{College of Science  \\ Tianjin University of Technology \\ Tianjin, 300384, P.R. China}
\email{wqh1208@aliyun.com}


\begin{abstract}
Let $G$ be a finite abelian group and $p$ be the smallest prime dividing $|G|$. Let $S$ be a sequence over $G$.
We say that $S$ is regular if for every proper subgroup $H \subsetneq G$, $S$ contains at most $|H|-1$ terms from
$H$. Let $\mathsf c_0(G)$ be the smallest integer $t$ such that every regular sequence $S$ over $G$ of length
$|S|\geq t$ forms an additive basis of $G$, i.e., $\sum(S)=G$. The invariant $\mathsf c_0(G)$ was first studied
by Olson and Peng in 1980's, and since then it has been determined for all finite abelian groups except for the
groups with rank 2 and a few groups of rank 3 or 4 with order less than $10^8$. In this paper, we focus on the
remaining case concerning groups of rank 2. It was conjectured by Gao et al. (Acta Arith. 168 (2015) 247-267) that $\mathsf c_0(G)=m(G)$. We confirm the conjecture for the case when $G=C_{n_1}\oplus C_{n_2}$ with $n_1|n_2$, $n_1\geq 2p$, $p\geq 3$ and $n_1n_2\geq 72p^6$.
\end{abstract}

\date{}

\maketitle

\noindent {\footnotesize {\it Keywords}: Finite abelian group; Additive basis; Regular sequence.} \\
\noindent {\footnotesize {\it 2020 Mathematics Subject Classifications}: 11B75, 11P70}
\section{Introduction and main results}
Through the paper, let $G$ be a finite abelian group, written additively, $p$ be the smallest prime dividing $|G|$ and $\mathsf r(G)$ denote the rank of $G$. Let $S$ be a sequence over $G$. We say that $S$ is an {\sl additive basis} of $G$ if every element of $G$ can be expressed as the sum over a nonempty subsequence of $S$. For every subgroup $H$ of $G$, let $S_H$ denote the  subsequence of $S$ consisting of all terms of $S$ contained in $H$. We say that $S$ is a regular sequence over $G$ if $|S_H|\leq |H|-1$ holds for every proper subgroup $H \subsetneq G$. Let $\mathsf c_0(G)$ denote the smallest integer $t$ such that every regular sequence over $G$ of length at least $t$ is an additive basis of $G$. The problem of determining $\mathsf c_0(G)$ was first proposed by Olson and it was conjectured that $\mathsf c_0(C_p\oplus C_p)=2p-1$. In 1987, Peng proved this conjecture and further determined $\mathsf c_0(G)$ for all the finite elementary abelian $p$-groups (\cite{Peng1,Peng2}). Those results and techniques used for determining $\mathsf c_0(G)$ have been successfully applied to the study of nonunique factorization (\cite{GPZ2013}).  Recently, the problem related to the additive basis of a finite abelian group has been investigated by several authors (\cite{GHQQZ, GQZ,QH1,QH2,QL}). Let
\[
 m(G)= \left\{ \begin{array}{ll} & |G|, \mbox{ if } G \mbox{ is cyclic}, \\
                                 & 2p-1, \mbox{ if } G =C_p\oplus C_p, \\
                                 & kp+2p-3, \mbox{ if } G=C_p\oplus C_{pk}\mbox{ and } k\geq 2, \\
                                 & \frac{|G|}{p}+p-2, \mbox{ otherwise}.
\end{array} \right.
\]
It has been proved that $\mathsf c_0(G)=m(G)$ for any of the following finite abelian groups:
\begin{enumerate}
\item $G$ is cyclic;

\item $|G|$ is even;

\item $\mathsf r(G)\geq 4$ and $G\neq C_3^3\oplus C_{3n}$ where $n>3$ is odd and is not a power of $3$ with $|G|<
    3.72\times 10^7$ ;

\item $\mathsf r(G)=3$ and either $p\geq 11$ or $3\leq p \leq 7$ with $|G| \geq 3.72\times 10^7$;

\item $\mathsf r(G)\geq 2$ and $G$ is a $p$-group;

\item $G=C_3\oplus C_{3q}$, where $q$ is a prime.
\end{enumerate}

In this paper, we focus our investigation on the remaining case when $G$ is of rank $2$. It was conjectured in \cite{GHQQZ} that $\mathsf c_0(G)=m(G)$. We confirm this conjecture for the case when $G=C_{n_1}\oplus C_{n_2}$ with $n_1|n_2$, $n_1\geq 2p$, $p\geq 3$ and $n_1n_2\geq 72p^6$.

\begin{theorem}\label{mainthm}
Let $G=C_{n_1}\oplus C_{n_2}$ with $n_1|n_2$, and $p\geq 3$ be the smallest prime divisor of $|G|$. If $n_1\geq 2p$ and $|G|\geq 72p^6$, then $\mathsf c_0(G)=|G|/p+p-2$.
\end{theorem}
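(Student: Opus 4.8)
\noindent\emph{Outline of the proof.} The plan is to establish the two inequalities $\mathsf c_0(G)\ge |G|/p+p-2$ and $\mathsf c_0(G)\le |G|/p+p-2$ separately; since $n_1\ge 2p\ge 6$, the group $G$ is neither cyclic, nor $C_p\oplus C_p$, nor of the form $C_p\oplus C_{pk}$, so $m(G)=|G|/p+p-2$. The lower bound is the elementary direction (and is already contained in \cite{GHQQZ}): fix a subgroup $H\le G$ of index $p$ and an element $e\in G\setminus H$, and put $S=\bigl(\prod_{h\in H\setminus\{0\}}h\bigr)\cdot e^{[p-2]}$; checking $|S_K|\le|K|-1$ for a proper subgroup $K$ by splitting on whether $K\subseteq H$ shows $S$ is regular of length $(|H|-1)+(p-2)=|G|/p+p-3$, and since every nonempty subsum lies in $\bigcup_{j=0}^{p-2}(je+H)$ the coset $(p-1)e+H$ is disjoint from $\Sigma(S)$, so $S$ is not an additive basis. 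The substance is the upper bound, so let $S$ be a regular sequence with $|S|\ge |G|/p+p-2$; writing $\Sigma(S)$ for the set of subsums over nonempty subsequences of $S$, we must show $\Sigma(S)=G$. As $\{0\}$ is a proper subgroup, $S$ has no zero term; and as $|S|\ge |G|/p+p-2\ge n_1+n_2-1$, which is the Davenport constant $\mathsf D(G)$ (here we use $n_1\ge 2p$ and $n_1\mid n_2$), $S$ has a nonempty zero-sum subsequence, so $0\in\Sigma(S)$ and it suffices to show $\Sigma^{*}(S):=\Sigma(S)\cup\{0\}=G$.

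Assume $\Sigma^{*}(S)\ne G$, aiming at a contradiction. \emph{Step 1} handles all but one very rigid configuration. Suppose there is a nontrivial subgroup $H^{*}\le G$ such that $\Sigma^{*}(S)$ is disjoint from some coset of $H^{*}$; this includes the case where $\Sigma^{*}(S)$ has nontrivial stabilizer, since then $\Sigma^{*}(S)$ is a proper union of cosets of its stabilizer. Pick $H^{*}$ of maximal order with this property and let $\pi\colon G\to G/H^{*}$ be the quotient map. Then $\Sigma^{*}(\pi(S))=\pi(\Sigma^{*}(S))$ is a proper subset of $G/H^{*}$, and maximality of $H^{*}$ forces the stabilizer of $\Sigma^{*}(\pi(S))$ in $G/H^{*}$ to be trivial (a nontrivial stabilizer would produce a coset of a strictly larger subgroup disjoint from $\Sigma^{*}(S)$). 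Writing $\Sigma^{*}(\pi(S))=\sum_{s\in S}\{0,\pi(s)\}$ and applying Kneser's addition theorem gives $|\Sigma^{*}(\pi(S))|\ge 1+(|S|-|S_{H^{*}}|)$, the quantity $|S|-|S_{H^{*}}|$ being the number of terms of $S$ outside $H^{*}$. Since also $|\Sigma^{*}(\pi(S))|\le [G:H^{*}]-1$ and $|S_{H^{*}}|\le|H^{*}|-1$ by regularity, writing $d=[G:H^{*}]$ (a divisor of $|G|$ with $p\le d\le|G|/p$, as $p$ is the least prime divisor of $|G|$ and $|H^{*}|\ge p$) we get
\[
|S|\ \le\ |S_{H^{*}}|+(d-2)\ \le\ \frac{|G|}{d}+d-3\ \le\ \frac{|G|}{p}+p-3,
\]
where the last inequality uses convexity of $x\mapsto|G|/x+x$ on $[p,|G|/p]$, with equal values at the endpoints. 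This contradicts $|S|\ge|G|/p+p-2$. Hence no nontrivial subgroup has a fully missed coset: from now on $\Sigma^{*}(S)$ is aperiodic and meets every coset of every nontrivial subgroup of $G$.

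\emph{Step 2}, the heart of the matter, must rule out this remaining configuration; here Kneser only yields the useless bound $|\Sigma^{*}(S)|\ge|S|+1$, so one exploits the internal structure of $S$. The plan: (i) for each subgroup $L$ of index $p$, write $S=S_L\cdot T_L$ with $T_L$ the subsequence of terms outside $L$; since $|T_L|=|S|-|S_L|\ge|S|-|G|/p+1\ge p-1$ and the image $\pi_L(T_L)$ in $G/L\cong C_p$ has only nonzero terms, the Cauchy--Davenport inequality gives $\Sigma^{*}(\pi_L(T_L))=C_p$, so $\Sigma^{*}(T_L)$ meets every $L$-coset and $\Sigma^{*}(S)\supseteq\bigcup_{y\in\Sigma^{*}(T_L)}\bigl(y+\Sigma^{*}(S_L)\bigr)$; (ii) since $\Sigma^{*}(S)\ne G$, for \emph{every} index-$p$ subgroup $L$ the set $\Sigma^{*}(S_L)$ must fail to fill $L$ even after all the translations available inside each coset; combining this simultaneous deficiency with a quantitative inverse (Kneser/Freiman-type) theorem for subset-sum sets in rank-$2$ groups, and with the regularity constraints $|S_K|\le|K|-1$, pins $S$ down to a near-extremal shape --- all but $O(p)$ of its terms lie in a single index-$p$ subgroup $H_1$, and the terms outside $H_1$ occupy only boundedly many $H_1$-cosets; (iii) feeding this back, $|S_{H_1}|\le|H_1|-1=|G|/p-1$, while the terms of $S$ outside $H_1$ number at most $p-2$ (otherwise, applying (i) with $L=H_1$ and using the structure from (ii) would give $\Sigma^{*}(S)=G$), so $|S|\le|G|/p+p-3<|G|/p+p-2$, a contradiction.

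The main obstacle is clearly step (ii): upgrading "$\Sigma^{*}(S)$ is aperiodic and proper, with deficient restrictions to all index-$p$ quotients" to the rigid near-extremal description of $S$. This cannot be done by iterating Kneser; it requires a quantitative inverse theorem for $\Sigma^{*}$ in groups of rank $2$ whose error terms must be kept under control, and it is exactly here that the hypothesis $|G|\ge72p^{6}$ is used, to make those error terms negligible compared with $m(G)$, while $n_1\ge2p$ guarantees that the extremal configuration has the predicted shape (for $n_1=p$ both the extremal sequence and the value of $m(G)$ genuinely change). Once step (ii) is in hand, the counting in steps (i) and (iii) is routine.
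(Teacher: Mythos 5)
Your lower bound construction and your Step 1 are both correct: the convexity argument showing that a fully missed coset of a nontrivial subgroup $H^{*}$ forces $|S|\le |G|/[G:H^{*}]+[G:H^{*}]-3\le |G|/p+p-3$ is sound, and it recovers (for $T=S$) what the paper imports as Lemma~2.3 of \cite{GQZ}. The problem is that everything after that point --- the actual content of the theorem --- is not proved. Your Step 2(ii) asserts that the simultaneous deficiency of $\Sigma^{*}(S_L)$ over all index-$p$ subgroups $L$, combined with ``a quantitative inverse (Kneser/Freiman-type) theorem for subset-sum sets in rank-$2$ groups,'' pins $S$ down to a near-extremal shape. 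No such inverse theorem is stated, cited, or proved, and none is available off the shelf; this is precisely where the entire difficulty of the problem sits. Step 2(iii) then compounds the gap: from ``at least $p-1$ terms of $S$ lie outside $H_1$'' you conclude $\Sigma^{*}(S)=G$, but Cauchy--Davenport only gives you that $\Sigma^{*}(T_{H_1})$ meets every $H_1$-coset; to fill each coset you would still need $\Sigma^{*}(S_{H_1})$ together with its available translates to cover $H_1$, which is again the unproved core. So the proposal reduces the theorem to an unestablished structural claim rather than proving it.

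For comparison, the paper does not prove any inverse/structure theorem for $S$. Instead it argues by a direct counting contradiction: assuming $\Sigma_0(S)\ne G$ (hence aperiodic), it splits on the maximal multiplicity $\mathsf h(S)$. When $\mathsf h(S)\le 2p-2$ the support is huge, so Lemma~\ref{subsetsums} (resting on the bound $f(k)\ge k^{2}/6$ from \cite{GHHLLP}) lets one greedily extract many disjoint subsets $A_i$ with $|\Sigma_0(A_i)|\ge p|A_i|+2$, and Kneser then forces $|\Sigma_0(S)|\ge n$. When $\mathsf h(S)\ge 2p-1$ one builds blocks $T_i\bm\cdot S_i$ with $S_i$ supported in cyclic subgroups and $|T_i|=2p-1$ chosen so that $\langle\supp(T_i),\supp(S_i)\rangle/\langle\supp(S_i)\rangle$ is large, giving $|\Sigma_0(T_i\bm\cdot S_i)|\ge 2p(|S_i|+1)$ via Corollary~\ref{coset}, followed by a delicate endgame with $2$-zero-sum free $3$-subsets. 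This is where $|G|\ge 72p^{6}$ is genuinely used --- to make the greedy counting close, not to control error terms in an inverse theorem. To salvage your approach you would have to either formulate and prove the inverse theorem you invoke, or abandon it in favour of a decomposition-and-count argument of the paper's type.
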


\section{Notations and Preliminaries}
Suppose that $G_0\subseteq G$ is a subset of $G$ and $\mathcal{F}(G_0)$ is the multiplicatively written, free
abelian monoid with basis $G_0$. The elements of $\mathcal{F}(G_0)$ are called {\it sequences} over $G_0$. We denote multiplication in $\mathcal{F}(G_0)$ by the bold symbol $\bm\cdot$ rather than by juxtaposition, and use brackets for all exponentiation in $\mathcal{F}(G_0)$.

A sequence $S \in \mathcal F(G)$ will be written in the form $S= g_1 \bm \cdot \ldots \bm\cdot g_{\ell},$ where $|S|= \ell$ is the {\it length} of $S$. For $g \in G$, let $\mathsf v_g(S) = |\{ i\in [1, \ell] : g_i =g \}|\,  $ denote the {\it multiplicity} of $g$ in $S$. We call $\mbox{supp}(S)=\{g\in G: \mathsf v_g(S)>0\}$ the {\it support} of $S$. Let $\mathsf h(S) = \mbox{max}\{\mathsf v_g(S): g\in G\}$. A sequence $T \in \mathcal F(G)$ is called a {\it subsequence} of $S$ and is denoted by $T \mid S$ if  $\mathsf v_g(T) \le \mathsf v_g(S)$ for all $g\in G$. Denote by $S \bm\cdot T^ {[-1]}$  the subsequence of $S$ obtained by removing the terms of $T$ from $S$.

If $S_1, S_2 \in \mathcal F(G)$, then $S_1 \bm\cdot S_2 \in \mathcal F(G)$ denotes the sequence satisfying that $\mathsf v_g(S_1 \bm\cdot S_2) = \mathsf v_g(S_1 ) + \mathsf v_g( S_2)$ for all $g \in G$. For convenience we  write
\begin{center}
 $g^{[k]} = \underbrace{g \bm\cdot \ldots \bm\cdot g}_{k} \in \mathcal F(G)\quad$
\end{center}
for $g \in G$ and $k$ a nonnegative integer.

Let $\sigma(S)=\sum_{g\in G}\mathsf v_g(S)g\in G$ be the sum of $S$. Define
  $$
  \sum(S)=\{\sigma(T): \ 1 \neq  T\mid S\},
  $$
\noindent where $1$ is the empty sequence, and $$
  \sum\nolimits_0(S)=\sum(S)\cup\{0\}.
  $$
\noindent We call a sequence $S$ a {\it zero-sum} sequence if $\sigma(S)=0$, and a {\it zero-sum free} sequence if $0\notin
\sum(S)$. We note that a subset of $G$ can be regarded as a sequence, so all above-mentioned concepts and notations  regarding a sequence of $G$ are valid for a subset of $G$. We say that a subset $A\subset G\setminus \{0\}$ is {\it $2$-zero-sum free} if $A$ contains no two distinct elements with sum zero.

Let $\mathsf D(G)$ denote the Davenport constant of $G$, which is defined as the smallest integer $t$ such that
every sequence $S$ over $G$ of length $|S|\geq t$ contains a nonempty zero-sum subsequence. Note that
$\mathsf D(C_{n_1}\oplus C_{n_2})=n_1+n_2-1$, where $1\leq n_1|n_2$ (\cite[Theorem 5.8.3]{GeK}).

For each subset $A$ of $G$, denote by $\langle A \rangle$ the subgroup generated by $A$.  Let ${\rm st}(A)=\{g\in
G: g+A=A\}$. Then ${\rm st}(A)$ is the maximal subgroup $H$ of $G$ such that $H+A=A$. The following is the well
known Kneser's theorem and a proof of it can be found in \cite{Na}.

\begin{lemma}(Kneser) \cite[Theorem 4.4]{Na}\label{Kneser} Let $A_1, \ldots, A_r$ be nonempty finite subsets of an
abelian group $G$, and let $H={\rm st}(A_1+\cdots +A_r)$. Then,
$$
|A_1+\cdots +A_r|\geq |A_1+H|+\cdots +|A_r+H|-(r-1)|H|.
$$
\end{lemma}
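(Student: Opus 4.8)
The statement is Kneser's theorem, which I would prove in two stages: reduce the general $r$-summand inequality to the two-summand case by induction on $r$, and then settle the case $r=2$ by a separate induction combining passage to a quotient with a combinatorial transform.

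For the reduction on $r$, assume the result both for two sets and for every family of fewer than $r$ sets. Put $B=A_1+\dots+A_{r-1}$ and apply the two-set inequality to the pair $(B,A_r)$, then feed in the inductive hypothesis for $A_1,\dots,A_{r-1}$. The only point to watch is how the stabilizers nest: any period of $B$ is automatically a period of $B+A_r$, so $\mathrm{st}(B)\subseteq \mathrm{st}(A_1+\dots+A_r)=H$, and a short computation absorbs the surplus between the several ``$+H$'' enlargements into the term $-(r-1)|H|$. This step is bookkeeping rather than substance.

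The content lies in the case $r=2$. Write $A,B$ for the two sets and $H=\mathrm{st}(A+B)$. I would first reduce to trivial stabilizer: since $A+B$ is a union of $H$-cosets, $A+B=(A+H)+(B+H)$, and passing to $\bar G=G/H$ produces images $\bar A,\bar B$ whose sumset has trivial stabilizer, with $|A+B|=|H|\,|\bar A+\bar B|$, $|A+H|=|H|\,|\bar A|$ and $|B+H|=|H|\,|\bar B|$. Hence it suffices to prove that a trivial stabilizer forces $|A+B|\ge|A|+|B|-1$. For this the basic tool is the Dyson transform $(A,B)\mapsto(A',B')$ with $A'=A\cup(B+e)$ and $B'=B\cap(A-e)$, which preserves $|A|+|B|$ and keeps $A'+B'\subseteq A+B$, so that whenever $e$ can be chosen with $\emptyset\ne B\cap(A-e)\subsetneq B$ an induction on $|B|$ can be brought to bear.

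The main obstacle is that, unlike in the Cauchy--Davenport setting, iterating this transform does not stay inside the trivial-stabilizer world: although $\mathrm{st}(A+B)=\{0\}$, the intermediate sumset $A'+B'$ can itself acquire a nonzero period, so the induction cannot simply quote the weaker bound $|A'+B'|\ge|A'|+|B'|-1$ for the transformed pair. The delicate heart of the argument is to reconcile these emergent periods with the hypothesis $H=\{0\}$ — to show that the transform either strictly decreases $|B|$ while leaving the stabilizer controlled, or else forces $B+e\subseteq A$ for every $e\in A-B$, whence some nonzero $d$ satisfies $A+d=A$ and thus $d\in\mathrm{st}(A+B)$, contradicting triviality of $H$. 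Organizing the induction (for instance as a minimal-counterexample argument that tracks the largest period of the sumset at each stage) so that it closes in the presence of these periods is exactly what makes Kneser's bound strictly subtler than the inequality it generalizes, and it is where essentially all the work lies.
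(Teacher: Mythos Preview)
The paper does not prove this lemma at all: it is quoted verbatim as Kneser's theorem and attributed to Nathanson's textbook, with no argument given. So there is no ``paper's own proof'' to compare against; the authors simply invoke the result.

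Your outline is the classical route and is, in broad strokes, the one in Nathanson. The reduction from $r$ summands to two is correct; the point that needs a line of care is that the inductive hypothesis should be applied to the $H$-saturated sets $A_1+H,\dots,A_{r-1}+H$, whose sumset $B+H$ has stabilizer exactly $H$ (you get $H\subseteq\mathrm{st}(B+H)$ trivially and the reverse inclusion because anything stabilizing $B+H$ stabilizes $(B+H)+A_r=B+A_r$). Your quotient reduction to trivial stabilizer is also correct, as is the Dyson/$e$-transform bookkeeping $|A'|+|B'|=|A|+|B|$ and $A'+B'\subseteq A+B$. The degenerate branch you describe---if no $e$ strictly shrinks $B'$ then $B-b_0\subseteq\mathrm{st}(A)\subseteq\mathrm{st}(A+B)=\{0\}$, so $|B|=1$---is right.

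Where your sketch is genuinely incomplete is exactly where you say it is: after one transform the new sumset $A'+B'$ may acquire a nontrivial period even though $A+B$ had none, so one cannot simply recurse on ``trivial stabilizer implies $|A+B|\ge|A|+|B|-1$''. You flag this honestly but do not resolve it. The standard fix is to run the induction on a stronger statement that carries the period along (equivalently, to prove directly that either $|A+B|\ge|A|+|B|-1$ or $A+B$ is periodic, and to show the transform preserves a suitable minimal-counterexample configuration). Since the paper only cites the result, your proposal is an acceptable pointer to the literature proof, but as a self-contained argument it stops just short of the crux.
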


\begin{lemma}\cite[Lemma 2.2]{GQZ}\label{lowerbound}
$\mathsf c_0(G)\geq m(G)$ for every finite abelian group $G$.
\end{lemma}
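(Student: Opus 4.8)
The plan is to prove the lower bound constructively: for every finite abelian group $G$ I would exhibit a single regular sequence $S$ over $G$ with $|S| = m(G)-1$ that is \emph{not} an additive basis, i.e. with $\sum(S)\neq G$. Such an $S$ certifies that the threshold defining $\mathsf c_0(G)$ exceeds $m(G)-1$, so $\mathsf c_0(G)\geq m(G)$. Since $m(G)$ is defined by four cases, I would give one tailored example per case, and for each verify exactly two things: that $S$ is regular (i.e. $|S_H|\leq |H|-1$ for every proper $H\subsetneq G$) and that some element of $G$ lies outside $\sum(S)$.

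The two boundary cases are immediate. If $G=C_n$ is cyclic with generator $g$, take $S=g^{[n-1]}$: no proper subgroup meets $\supp(S)$, so $S$ is regular, and $\sum(S)=\{g,2g,\ldots,(n-1)g\}=G\setminus\{0\}$. If $G=C_p\oplus C_p$ with basis $e_1,e_2$, take $S=e_1^{[p-1]}\bm\cdot e_2^{[p-1]}$: every nontrivial proper subgroup has order $p$ and contains at most $p-1$ terms, so $S$ is regular, while $\sum(S)=\{ae_1+be_2:0\leq a,b\leq p-1\}\setminus\{0\}=G\setminus\{0\}$. In both cases $|S|=m(G)-1$ and $0\notin\sum(S)$.

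For the generic case $m(G)=|G|/p+p-2$, I would fix a subgroup $H$ of index $p$ and an element $e\in G\setminus H$, let $T$ be the sequence listing each element of $H\setminus\{0\}$ once, and set $S=T\bm\cdot e^{[p-2]}$, so $|S|=(|H|-1)+(p-2)=|G|/p+p-3$. Regularity is a short check: a proper $H'$ with $e\notin H'$ contains only the $|H\cap H'|-1\leq |H'|-1$ terms coming from $H$, whereas if $e\in H'$ then $[H':H'\cap H]=p$ and $|S_{H'}|=|H\cap H'|+p-3\leq p|H\cap H'|-1=|H'|-1$. Since the image in $G/H\cong C_p$ of any subsequence sum is $j(e+H)$ with $0\leq j\leq p-2$, the whole coset $(p-1)e+H$ is omitted, so $\sum(S)\neq G$.

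The hard case, and the main obstacle, is $G=C_p\oplus C_{pk}$ with $k\geq 2$, where $m(G)-1=pk+2p-4$. A Cauchy--Davenport argument shows that if $\sum(S)$ omitted an \emph{entire} coset of any nontrivial subgroup then $|S|\leq |G|/p+p-3<m(G)-1$ (indeed, to miss a coset of an index-$p$ subgroup the terms outside it, viewed in $C_p$, must avoid a fixed nonzero value in their subset sums, forcing at most $p-2$ of them); for $p\geq 3$ the length also exceeds $\mathsf D(G)-1=pk+p-2$, so $S$ cannot even be zero-sum free. Hence every coset-based construction is excluded and the extremal sequence must omit only a \emph{single point}, which forces a genuinely two-generator design. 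Writing $e_1,e_2$ for generators of the two factors, the plan is to take
$$S=e_2^{[pk-2]}\bm\cdot (e_1+e_2)^{[2p-2]},\qquad |S|=pk+2p-4 .$$
Regularity holds because $\supp(S)=\{e_2,e_1+e_2\}$ generates $G$ (their difference is $e_1$), so no proper subgroup contains both, while $\langle e_2\rangle$ and $\langle e_1+e_2\rangle$ each have order $pk$ and absorb their $pk-2$ and $2p-2\leq pk-1$ terms. For the omission, a subsequence sum equals $c\,e_1+(b+c)e_2$ with $0\leq b\leq pk-2$ and $0\leq c\leq 2p-2$; analysing $c\equiv x_0\pmod p$ shows every target $x_0e_1+y_0e_2$ is representable except when $x_0=p-1$ (which forces $c=p-1$ uniquely) together with the unavailable choice $b=pk-1$, leaving exactly $(p-1)e_1+(p-2)e_2$ unrepresented. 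I expect the delicate bookkeeping here --- confirming that this is the \emph{only} omitted point and that $0\in\sum(S)$, so the omitted element is genuinely this nonzero one --- to be where the real work lies; the remaining three cases are routine by comparison.
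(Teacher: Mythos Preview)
Your constructions are correct in every case, and the verification that $(p-1)e_1+(p-2)e_2$ is the unique omitted element in the $C_p\oplus C_{pk}$ case goes through exactly as you outline: for $x_0\neq p-1$ there are two admissible values of $c$ differing by $p$, so the forbidden residue $b\equiv -1\pmod{pk}$ cannot block both, while for $x_0=p-1$ the single choice $c=p-1$ forces the miss precisely at $y_0=p-2$; and $0\in\sum(S)$ via $c=p,\ b=pk-p$.

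As for comparison with the paper: the paper does not prove this lemma at all but simply quotes it from \cite[Lemma~2.2]{GQZ}. Your argument is the standard constructive proof that underlies that citation --- exhibiting, case by case, a regular sequence of length $m(G)-1$ that fails to be an additive basis --- and is exactly the approach one finds in the source reference. So there is nothing to contrast methodologically; you have supplied what the present paper omits.

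One minor remark on presentation: in the generic case your regularity check when $e\in H'$ reads $|H\cap H'|+p-3\le p|H\cap H'|-1$, which reduces to $(p-1)|H\cap H'|\ge p-2$ and holds already for $|H\cap H'|=1$; you might state this explicitly rather than leaving it implicit. Also, the paragraph of heuristics preceding the $C_p\oplus C_{pk}$ construction (about Cauchy--Davenport ruling out coset-based examples) is motivation rather than proof and could be trimmed, since the explicit example stands on its own.
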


\begin{lemma}\cite[Lemma 2.3]{GQZ}\label{stabilizer}
Let $S$ be a regular sequence over $G$ of length $|S|\geq \mbox{max}\{|G|/p+p-2, \mathsf D(G)\}$. Let $T$ be a nonempty subsequence of $S$. If $\sum(S)\neq G$, then
\begin{enumerate}
\item ${\rm st}(\sum_0(T))=\{0\}$;
\item $|\sum_0(T))|\geq |T|+1$.
\end{enumerate}
\end{lemma}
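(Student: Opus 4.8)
The plan is to reduce everything to two applications of Kneser's theorem (Lemma \ref{Kneser}), bridged by a periodicity observation. First I would record two standing consequences of the hypotheses. Since $G$ is nontrivial, the zero subgroup $\{0\}$ is proper, so regularity forces $|S_{\{0\}}|\le |\{0\}|-1=0$; hence \emph{$S$, and therefore every subsequence $T$, has no term equal to $0$}. Also $|S|\ge \mathsf D(G)$ guarantees a nonempty zero-sum subsequence, so $0\in\sum(S)$; combined with $\sum(S)\ne G$ this gives $\sum\nolimits_0(S)=\sum(S)\ne G$.

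The technical heart is the special case $T=S$: I claim ${\rm st}(\sum\nolimits_0(S))=\{0\}$. Writing $S=g_1\bm\cdot\cdots\bm\cdot g_\ell$ with $\ell=|S|$, one has the Minkowski-sum description $\sum\nolimits_0(S)=\{0,g_1\}+\cdots+\{0,g_\ell\}$. Let $K={\rm st}(\sum\nolimits_0(S))$ and suppose $K\ne\{0\}$; since $\sum\nolimits_0(S)\ne G$ we get $K\ne G$, so $\{0\}\ne K\subsetneq G$ is proper nontrivial. Because $|\{0,g_i\}+K|$ equals $|K|$ or $2|K|$ according as $g_i\in K$ or not, Kneser's theorem gives
$$|\sum\nolimits_0(S)|\ge \sum_{i=1}^{\ell}|\{0,g_i\}+K|-(\ell-1)|K|=(|S|-|S_K|+1)|K|.$$
Regularity bounds $|S_K|\le |K|-1$, so $|\sum\nolimits_0(S)|\ge(|S|-|K|+2)|K|$. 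On the other hand $\sum\nolimits_0(S)$ is $K$-periodic and proper, hence misses a whole coset, giving $|\sum\nolimits_0(S)|\le |G|-|K|$. Comparing the two bounds and dividing by $|K|$ yields $|S|\le |G|/|K|+|K|-3$. The function $d\mapsto |G|/d+d$ is convex, and every proper nontrivial divisor $d=|K|$ lies in $[p,|G|/p]$, so the right side is maximized at the endpoints, both equal to $|G|/p+p-3$; thus $|S|\le |G|/p+p-3$, contradicting $|S|\ge |G|/p+p-2$. Hence $K=\{0\}$.

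Next I would prove (1) for an arbitrary nonempty $T\mid S$ by transferring periodicity. Put $H={\rm st}(\sum\nolimits_0(T))$ and $S'=S\bm\cdot T^{[-1]}$. For any $W\mid S$, split $W=W_T\bm\cdot W'$ with $W_T\mid T$ and $W'\mid S'$; then $\sigma(W)\in\sum\nolimits_0(T)+\sigma(W')$, and since $H$ stabilizes $\sum\nolimits_0(T)$,
$$\sigma(W)+H\subseteq \sum\nolimits_0(T)+\sigma(W')=\{\sigma(U\bm\cdot W'):U\mid T\}\subseteq\sum\nolimits_0(S).$$
As $W$ ranges over all subsequences this shows $\sum\nolimits_0(S)+H\subseteq\sum\nolimits_0(S)$, i.e. $H\subseteq{\rm st}(\sum\nolimits_0(S))=\{0\}$ by the special case, proving (1). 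Finally (2) is immediate: with trivial stabilizer Kneser gives $|\sum\nolimits_0(T)|\ge\sum_{j=1}^{|T|}|\{0,h_j\}|-(|T|-1)$, and since no term $h_j$ is $0$ each summand equals $2$, whence $|\sum\nolimits_0(T)|\ge 2|T|-(|T|-1)=|T|+1$.

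The main obstacle is the counting inequality in the second paragraph: one must see that regularity supplies \emph{exactly} the bound $|S_K|\le|K|-1$ needed to offset the $-(\ell-1)|K|$ loss in Kneser, and that convexity collapses the worst case over all proper subgroups onto the order-$p$ (equivalently index-$p$) subgroups, which is precisely where the hypothesis $|S|\ge |G|/p+p-2$ becomes sharp. Everything else is bookkeeping once the special case $T=S$ is in hand.
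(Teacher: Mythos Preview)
Your argument is correct. The Kneser calculation $(|S|-|S_K|+1)|K|\le |G|-|K|$ combined with regularity $|S_K|\le|K|-1$ and the convexity of $d\mapsto |G|/d+d$ on $[p,|G|/p]$ cleanly forces $|S|\le |G|/p+p-3$, and the periodicity transfer ${\rm st}(\sum_0(T))\subseteq{\rm st}(\sum_0(S))$ via the splitting $W=W_T\bm\cdot W'$ is sound (the splitting exists since $\mathsf v_g(W)\le\mathsf v_g(T)+\mathsf v_g(S')$ for every $g$).

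The only comparison to make is that the paper does not actually prove this lemma: it quotes \cite[Lemma~2.3]{GQZ} and adds a one-line remark that $|S|\ge\mathsf D(G)$ gives $0\in\sum(S)$, whence $\sum(S)=\sum_0(S)$ and the cited version applies verbatim. Your write-up is therefore a self-contained reconstruction of the underlying argument rather than an alternative route; it is more informative than the paper's treatment, and nothing in it conflicts with what the cited lemma would supply.
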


We remark  that since $|S|\geq \mathsf D(G)$, we have $0\in \sum(S)$. Thus $\sum(S)=\sum_0(S)$, and the above lemma follows immediately from \cite[Lemma 2.3]{GQZ}.

For a finite abelian group $G$ and a positive integer $k$, let $f(G,k)=\mbox{min}\{|\sum(S)|:S \mbox{ is a subset of } G \mbox{ with } |S|=k \mbox{ and } 0\notin \sum(S)\}$. If $0\in \sum(S)$ for every $S\subseteq G$ of $|S|=k$, we set $f(G,k)=\infty$. Let $f(k)=\mbox{min}\{f(G,k): G \mbox{ is a finite abelian group}\}$.

\begin{lemma}\cite[Theorem 1.1]{GHHLLP}\label{f6}
$f(k)\geq \frac{k^2}{6}$ holds for every positive integer $k$.
\end{lemma}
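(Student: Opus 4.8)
The plan is to prove the equivalent assertion that \emph{every} zero-sum free subset $S$ of \emph{every} finite abelian group $G$ with $|S|=k$ has $|\sum(S)|\ge k^2/6$, by strong induction on $k$. Replacing $G$ by $\langle\supp(S)\rangle$ costs nothing, so I may assume $G=\langle S\rangle$; and since $S$ is zero-sum free, $0\notin\sum(S)$, whence $|\sum(S)|=|\sum\nolimits_0(S)|-1$. The crucial observation is that $\sum\nolimits_0(S)=\sum_{a\in S}\{0,a\}$ is a genuine Minkowski sum of the two-element sets $\{0,a\}$, which makes Kneser's theorem (Lemma \ref{Kneser}) the natural engine. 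The single parameter that governs the argument is the stabilizer $H={\rm st}(\sum\nolimits_0(S))$, and I split the analysis according to how many terms of $S$ lie in $H$.

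Write $u=|S_H|$ for the number of terms inside $H$ and $t=k-u$ for those outside. Since $|\{0,a\}+H|$ equals $|H|$ or $2|H|$ according as $a\in H$ or not, Kneser's theorem gives $|\sum\nolimits_0(S)|\ge (t+1)|H|$. Moreover $0\in\sum\nolimits_0(S)$ forces $H\subseteq\sum\nolimits_0(S)$, so the whole coset $H$ is present, and the $u$ terms lying in $H$ form a zero-sum free subset of $H$; hence $|H|\ge|\sum\nolimits_0(S_H)|\ge f(u)+1$, which by the induction hypothesis is at least $u^2/6+1$. Combining the two estimates yields
\[
|\sum(S)|\ \ge\ (k-u+1)\Big(\tfrac{u^2}{6}+1\Big)-1 .
\]
A direct check shows the right-hand side is at least $k^2/6$ for all $u$ close to $k$; in fact it equals $k^2/6$ exactly at $u=k$, which painlessly absorbs the degenerate case $H=G$. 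Thus the induction closes whenever all but a few of the generators already lie in the stabilizer.

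The genuine obstacle is the complementary regime — in particular the trivial-stabilizer case $H=\{0\}$ — where the product estimate degenerates to the useless linear bound $|\sum(S)|\ge k$. Here I would fix a single term $a\in S$, set $S'=S\bm\cdot a^{[-1]}$, and study the increment $g_a=|\sum(S)|-|\sum(S')|$, the number of subset sums that genuinely require $a$. Since $S'$ is a zero-sum free subset of size $k-1$, the induction hypothesis gives $|\sum(S')|\ge (k-1)^2/6$, and because $(k-1)^2+(2k-1)=k^2$, it suffices to produce one term $a$ with $g_a\ge (2k-1)/6$. Equivalently, the whole theorem reduces to an increment of size roughly $k/3$ per generator, and the constant $1/6$ then emerges as $\sum_{i\le k} i/3=k(k+1)/6\ge k^2/6$. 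Establishing such an increment is the step I expect to be hardest: one must show that when $\sum\nolimits_0(S)$ has trivial stabilizer, some generator contributes at least about a third of the new level of subset sums.

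To carry out that last step I would exploit the dichotomy underlying Kneser's inequality. Splitting $S=T_1\bm\cdot T_2$ and applying Kneser to $\sum\nolimits_0(S)=\sum\nolimits_0(T_1)+\sum\nolimits_0(T_2)$ recovers, with trivial stabilizer, only $|\sum(S)|\ge|\sum(T_1)|+|\sum(T_2)|$, which for the best (extreme) split falls a \emph{linear} amount short of $k^2/6$; the missing term is precisely the surplus by which the sets $\sum\nolimits_0(T_i)$ beat the extremal case of Kneser's bound. Since the configurations that are near-extremal for $f$ are thick, essentially two-dimensional sets and hence far from arithmetic progressions — for which alone Kneser is tight — a Freiman--Vosper type inverse estimate should supply exactly this surplus, either forcing $|\sum\nolimits_0(T_1)+\sum\nolimits_0(T_2)|$ strictly above the trivial bound or else revealing a nontrivial stabilizer that returns us to the first branch. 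Finally, the bound $k^2/6$ is vacuous for $k\le 3$ and is checked directly for the first few values to seed the induction; the real content, and the principal difficulty, lives entirely in securing the linear increment in the trivial-stabilizer case.
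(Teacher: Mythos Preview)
The paper does not prove this lemma at all: it is quoted verbatim as \cite[Theorem 1.1]{GHHLLP} and used as a black box. So there is no ``paper's own proof'' for your argument to be compared against; the relevant comparison is with the proof in \cite{GHHLLP}, which is substantially more intricate than what you have written.

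Your proposal is not a proof but a plan, and the gap is exactly where you yourself locate it. The Kneser/stabilizer branch works only when almost all of $S$ lies in $H$: your inequality $(k-u+1)(u^2/6+1)-1\ge k^2/6$ fails for small $u$ (already at $u\le 1$ once $k\ge 7$), so the case $H=\{0\}$ --- and more generally small $u$ --- carries the entire content of the theorem. For that case you reduce to finding a single element $a$ whose removal drops $|\sum(S)|$ by at least $(2k-1)/6$, and then assert that ``a Freiman--Vosper type inverse estimate should supply exactly this surplus.'' That assertion is not an argument: Freiman--Vosper style results concern sets with small doubling and apply most cleanly in torsion-free or cyclic settings, whereas here you must handle arbitrary finite abelian groups and the sets $\sum\nolimits_0(T_i)$ need not have small doubling at all. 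Nothing you have written forces the required increment of order $k/3$, and indeed obtaining a linear-in-$k$ increment uniformly over all abelian groups is precisely the hard work carried out in \cite{GHHLLP}. A secondary issue: at $u=k$ your displayed bound relies on $f(u)\ge u^2/6$ with $u=k$, which is the statement being proved; the case $H=G$ (i.e.\ $\sum\nolimits_0(S)=G$) must be handled separately by showing $|G|-1\ge k^2/6$, which you have not done either.
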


\section{Proof of Theorem \ref{mainthm}}
We first prove a few technical results.
\begin{lemma}\label{subsetsums}
Let $G$ be an abelian group and $A\subseteq G\setminus\{0\}$. If $|A|\geq 6p(p+1)+1$, then there is a zero-sum free subset $B\subseteq A$ such that $|\sum_0(B)|\geq p|B|+2$ with $|B|\leq 6p+1$.
\end{lemma}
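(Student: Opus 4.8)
The plan is to start from a large subset $A \subseteq G \setminus \{0\}$ and extract a zero-sum free subset $B$ whose subset-sum set $\sum_0(B)$ is large relative to $|B|$, quantitatively $|\sum_0(B)| \ge p|B| + 2$. The natural tool is Lemma \ref{f6}: if $B \subseteq G$ is zero-sum free with $|B| = k$, then $|\sum(B)| \ge f(G,k) \ge f(k) \ge k^2/6$, hence $|\sum_0(B)| \ge k^2/6 + 1$. This already beats $p|B| + 2 = pk + 2$ as soon as $k^2/6 \ge pk + 1$, i.e. roughly $k \ge 6p + 1$. So the entire task reduces to producing a \emph{zero-sum free} subset $B$ of $A$ with $|B|$ in a controlled range, specifically $6p+1 \le |B| \le 6p+1$ (so $|B| = 6p+1$ exactly, or slightly less if one is careful with the inequality). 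The hypothesis $|A| \ge 6p(p+1)+1$ should be exactly what is needed to guarantee such a $B$ exists.

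First I would fix $k = 6p+1$ and greedily build a zero-sum free subset of $A$ of size $k$. Begin with $B_0 = \emptyset$ and, having chosen a zero-sum free $B_i \subseteq A$ of size $i < k$, try to adjoin an element of $A \setminus B_i$ keeping zero-sum freeness; this fails only if every $a \in A \setminus B_i$ satisfies $-a \in \sum_0(B_i)$, i.e. $A \setminus B_i \subseteq -\sum_0(B_i)$. Since $B_i$ is zero-sum free of size $i \le k-1 = 6p$, a crude bound $|\sum_0(B_i)| \le 2^i$ is too weak; instead I would bound $|\sum_0(B_i)|$ more carefully, or simply choose the threshold on $|A|$ to dominate it. In fact, if the greedy process halts at size $i$, then $|A| \le |B_i| + |\sum_0(B_i)| = i + |\sum_0(B_i)|$. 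To make this a contradiction with $|A| \ge 6p(p+1)+1$ when $i \le 6p$, I need $|\sum_0(B_i)| < 6p(p+1) + 1 - i$; using $|\sum_0(B_i)| \le 2^{i}$ would force a comparison $2^{6p} $ vs $6p^2$, which is false. So the crude bound must be replaced: the correct move is to run the greedy process only up to a \emph{small} size and then invoke Lemma \ref{f6} to show the subset-sums already exhaust most of $G$, or alternatively to iterate: partition or repeatedly peel off zero-sum free subsets.

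The cleaner route, which I expect the authors take, is this. If $A$ contains \emph{no} zero-sum free subset of size $6p+1$, then every $6p+1$-subset of $A$ has a zero-sum subsequence; equivalently $\mathsf D$-type behavior forces structure. But more directly: take a maximal zero-sum free subset $B \subseteq A$; then $|B| \le \mathsf D(G) - 1$, and by maximality $A \setminus B \subseteq -\sum_0(B)$, so $|A| \le |B| + |\sum_0(B)|$. If $|B| \ge 6p+1$, shrink $B$ down to exactly $6p+1$ elements (a subset of a zero-sum free set is zero-sum free) and apply Lemma \ref{f6} to get $|\sum_0(B)| \ge (6p+1)^2/6 + 1 > p(6p+1) + 2$, and we are done. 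If instead $|B| \le 6p$, then $|A| \le 6p + |\sum_0(B)|$; here I would bound $|\sum_0(B)|$ by something polynomial in $p$ — for instance, since $B$ is zero-sum free of size $\le 6p$ inside an arbitrary group, the trivial bound $|\sum_0(B)| \le 2^{|B|}$ is useless, so one instead uses that a zero-sum free \emph{set} (distinct elements) of size $t$ satisfies $|\sum_0(B)| \le $ (something like) the sum of a short arithmetic-type progression, but in a general group the right statement is just $|\sum_0(B)| \le |G|$ — which is not bounded.

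Hence the \textbf{main obstacle} is precisely handling the case where the maximal zero-sum free subset of $A$ is \emph{small} (size $\le 6p$) yet $A$ is huge: one must rule this out. The resolution is that if $B$ is a maximal zero-sum free subset of $A$ with $|B| = t \le 6p$, then $A \setminus B \subseteq -\sum_0(B)$ means all but $t$ elements of $A$ lie in the $t$-element-indexed set $\sum_0(B)$ whose size is at most $2^t$ — but since $t \le 6p$, $2^t$ can be astronomically larger than $6p(p+1)$, so this alone gives no contradiction. The genuine fix is to note that $-\sum_0(B)$ is a union of at most... — actually the correct and standard argument is: among $A$, pick elements greedily; each new element either extends the zero-sum free set or produces a new zero-sum subset, and counting zero-sum subsets one forces $|B|$ large. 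I would therefore set up the greedy extraction so that the stopping size is forced to be at least $6p+1$ by the sheer size $|A| \ge 6p(p+1) + 1 \ge 6p \cdot |\{\text{elements}\}|$-type counting, then truncate to $6p+1$ and finish with Lemma \ref{f6}; nailing down that counting bound (showing a zero-sum free set of size $< 6p+1$ cannot "cover" $6p(p+1)+1$ elements of $A$ via its subset sums) is the one real computation, and the constant $6p(p+1)+1$ is chosen to make it work with room to spare.
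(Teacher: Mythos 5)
Your plan has the right two ingredients (a greedy construction of a zero-sum free subset, and Lemma \ref{f6} applied at size $6p+1$), but you never close the case you yourself flag as the ``main obstacle'', and the fix you sketch for it is wrong. You insist on forcing the greedy process up to size exactly $6p+1$, and your proposed ``one real computation'' is that a zero-sum free subset of size at most $6p$ cannot cover $6p(p+1)+1$ elements of $A$ via its subset sums. That statement is false: a zero-sum free set of size $t$ can have $2^{t}-1$ distinct subset sums (e.g.\ $\{1,2,4,\dots,2^{t-1}\}$ in a large cyclic group), so $-\sum(B)$ can easily absorb all of $A\setminus B$ even when $|B|\le 6p$. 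Worse, the target itself is unattainable in general: in $C_n$ with $n$ of order $p^2$, every zero-sum free subset has size $O(\sqrt{n})=O(p)$, so $A$ may contain no zero-sum free subset of size $6p+1$ at all. Note that the lemma only asks for $|B|\le 6p+1$, not $|B|=6p+1$.

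The missing idea is a dichotomy for which you have all the pieces but never assemble. When you reach a zero-sum free $B_0$ with $|B_0|=k\le 6p$, either $|\sum(B_0)|\ge p|B_0|+1$ already, in which case $B_0$ itself witnesses the lemma and you stop; or $|\sum(B_0)|\le pk\le 6p^2<|A|-|B_0|$ (this is where $|A|\ge 6p^2+6p+1$ enters), in which case $-\sum(B_0)$ cannot cover $A\setminus B_0$ and the greedy step succeeds. So the process either terminates early with a valid $B$, or reaches size $6p+1$, where Lemma \ref{f6} gives $|\sum(B)|\ge (6p+1)^2/6\ge p(6p+1)+1$. Equivalently, in your ``maximal zero-sum free subset'' variant: maximality gives $A\setminus B\subseteq -\sum(B)$, hence $|\sum(B)|\ge |A|-|B|\ge 6p^2+1\ge p|B|+1$ whenever $|B|\le 6p$ --- the very inequality $|A|\le |B|+|\sum_0(B)|$ that you wrote down and then tried to contradict is, read in the other direction, exactly the conclusion you need. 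This dichotomy is precisely how the paper's induction runs.
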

\begin{proof} It is sufficient to prove that there is a zero-sum free subset $B\subseteq A$ such that $|\sum(B)|\geq p|B|+1$ with $|B|\leq 6p+1$. We first prove that for every $k\leq 6p+1$ there is  a zero-sum free subset $B\subseteq A$ such that either $|B|=k$ or $|B|< k$ and $|\sum(B)|\geq p|B|+1$.

 We proceed by induction on $k$. If $k=1$, then the result holds trivially. Assume that the result is true for each $k$ ($k<6p+1$). We next prove it is also true for $k+1$. By the induction hypothesis, there is  a zero-sum free subset $B_0\subseteq A$ such that either $|B_0|=k$ or $|B_0|< k$ and $|\sum(B_0)|\geq p|B_0|+1$. If $|\sum(B_0)|\geq p|B_0|+1$, then let $B=B_0$. Hence $|B|<k+1$ and $|\sum(B)|\geq p|B|+1$, so we are done. Thus we may assume that $|B_0|=k$ and $|\sum(B_0)|\leq p|B_0|$. It follows that
 $|-\sum(B_0)|=|\sum(B_0)|\leq pk\leq 6p^2\leq |A|-(6p+1)<|A|-|B_0|=|A \setminus B_0|$. Hence, there is an element $g\in A \setminus B_0$ such that $g \not \in -\sum(B_0)$. So, $B_0 \cup \{g\}$ is a zero-sum free subset of $A$. Set $B=B_0\cup \{g\}$, and then the result holds for $k+1$.

 Next, let $k=6p+1$. We have just proved that there is a zero-sum free subset $B$ of $A$ such that either $|B|=6p+1$ or $|\sum(B)|\geq p|B|+1$ and $|B|<6p+1$. If the latter is true, then we are done. So, we may assume that $|B|=6p+1$.  By Lemma~\ref{f6}, $|\sum(B)|\geq f(6p+1)\geq (6p+1)^2/6\geq p(6p+1)+1$ and again we are done.
\end{proof}

\begin{lemma}\label{T}
Let $G$ be a finite abelian group with smallest prime divisor $p$ of $|G|$. Let $T$ be a sequence over $G\setminus \{0\}$ and $H=\langle \mbox{supp}(T)\rangle$. Then $|\sum_0(T)|\geq |T|+1$ if one of the following holds
\begin{enumerate}
\item $|T|\leq p-1$;
\item $|T|\leq 2p-1$ and $|H|\geq 2p$.
\end{enumerate}
\end{lemma}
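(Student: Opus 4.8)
My plan is to rewrite the set of subsequence sums of $T$ as an iterated sumset and apply Kneser's theorem (Lemma~\ref{Kneser}). Write $T = g_1 \bm\cdot \ldots \bm\cdot g_\ell$ with $\ell = |T|$. Choosing a subsequence of $T$ amounts to choosing a subset of the index set $\{1,\dots,\ell\}$, so
\[
\sum\nolimits_0(T) \;=\; \{0,g_1\} + \{0,g_2\} + \cdots + \{0,g_\ell\},
\]
a Minkowski sum of $\ell$ two-element sets (this identity is valid regardless of repetitions among the $g_i$). I would then put $K = {\rm st}\bigl(\sum\nolimits_0(T)\bigr)$ and distinguish cases according to whether $K$ is trivial, using throughout that $\sum\nolimits_0(T)$ is a nonempty union of cosets of $K$ (it contains $0$), so its size is a positive multiple of $|K|$.

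If $K = \{0\}$, then each $\{0,g_i\}$ has exactly two elements since $g_i \neq 0$, and Kneser's bound gives directly $|\sum\nolimits_0(T)| \geq 2\ell - (\ell-1) = \ell + 1$, settling both (1) and (2). If $K \neq \{0\}$, then $|K| \geq p$ because $p$ is the smallest prime dividing $|G|$, and there are two sub-cases. If some $g_j \notin K$, then $\sum\nolimits_0(T)$ meets at least the two distinct cosets $K$ and $g_j + K$, so $|\sum\nolimits_0(T)| \geq 2|K| \geq 2p$, which is $\geq \ell+1$ because $\ell \leq 2p-1$ in both (1) and (2). If instead every $g_i \in K$, then $\sum\nolimits_0(T) \subseteq K$, hence (being a union of $K$-cosets containing $0$) equals $K$; since always $\sum\nolimits_0(T) \subseteq H = \langle \supp(T)\rangle$ while now $H \subseteq K$, we get $K = H$ and $|\sum\nolimits_0(T)| = |H|$. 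In case (1) this is $\geq p \geq \ell+1$ (as $T$ is nonempty, $H \neq \{0\}$); in case (2) the hypothesis $|H| \geq 2p$ makes it $\geq 2p \geq \ell + 1$.

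I do not anticipate a real obstacle: once the sumset identity and Kneser's theorem are in hand, everything is coset counting. The only delicate point is the last sub-case, where $\sum\nolimits_0(T)$ collapses onto $H$ and the estimate $|\sum\nolimits_0(T)| = |H|$ is sharp; this is exactly where assumption (2) must include $|H| \geq 2p$, since for $T = g^{[2p-1]}$ with $\ord(g) = p$ one has $|\sum\nolimits_0(T)| = p < |T| + 1$, showing that $|T| \leq 2p-1$ by itself does not suffice.
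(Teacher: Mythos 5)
Your proof is correct and follows essentially the same route as the paper: both write $\sum_0(T)$ as the Minkowski sum $\{0,g_1\}+\cdots+\{0,g_\ell\}$, apply Kneser's theorem when the stabilizer is trivial, and otherwise split according to whether all terms of $T$ lie in the stabilizer (your dichotomy ``every $g_i\in K$'' versus ``some $g_j\notin K$'' is exactly the paper's ``$M\supseteq H$'' versus ``$M\nsupseteq H$''). The only cosmetic difference is that in the nontrivial-stabilizer case you count cosets directly rather than invoking Kneser a second time.
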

\begin{proof}
 Let $T=g_1\bm\cdot\ldots \bm\cdot g_{k}$. Then,
 $$
 \sum\nolimits_0(T)=\{{g_1},0\}+\cdots+\{{g_{k}},0\}.
 $$
Let $M=\mbox{st}(\{{g_1},0\}+\cdots+\{{g_{k}},0\})$. If $M=\{0\}$, then by Lemma~\ref{Kneser}, $|\sum_0(T)|=|\{{g_1},0\}+\cdots+\{{g_{k}},0\}|\geq k+1=|T|+1$ as desired.

Next we assume $M \neq \{0\}$, so $|M|\geq p$.

(1) Given that $|T|\leq p-1$. Clearly, $|\sum_0(T)|=|\{{g_1},0\}+\cdots+\{{g_{k}},0\}|=|\{{g_1},0\}+\cdots+\{{g_{k}},0\}+M|\geq|M|\geq k+1$ as desired.

(2) Given that $|T|\leq 2p-1$ and $|H|\geq 2p$. If $M\supseteq H$, then as above $|\sum_0(T)|\geq|M|\geq|H|\geq 2p\geq |T|+1$. Next we assume that $\{0\} \neq M \nsupseteq H$. Clearly, $\sum_0(T)=\{{g_1},0\}+\cdots+\{{g_{k}},0\}\subsetneq M$. Thus $g_i\notin M$ for some $1\leq i\leq k$, and so $|\{{g_i},0\}+M|=2|M|$. By Lemma~\ref{Kneser}, $|\sum_0(T)|=|\{{g_1},0\}+\cdots+\{{g_{k}},0\}|\geq 2|M|\geq 2p\geq |T|+1$ as desired.
\end{proof}
As a consequence, we  obtain the following corollary, which will be used in the proof for our main result.
\begin{cor}\label{coset}
Let $G$ be a finite abelian group with smallest prime divisor $p$ of $|G|$. Let $S$ and  $T$ be sequences over $G$. Suppose $g\notin \langle \mbox{supp}(S)\rangle$ for every $g|T$. Then $|\sum_0(T\bm\cdot S)|\geq (|T|+1)|\sum_0(S)|$ if one of the following holds
\begin{enumerate}
\item $|T|\leq p-1$;
\item $|T|\leq 2p-1$ and $|\langle \mbox{supp}(T\bm\cdot S)\rangle/\langle \mbox{supp}(S)\rangle|\geq 2p$.
\end{enumerate}
\end{cor}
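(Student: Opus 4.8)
The plan is to reduce Corollary~\ref{coset} to Lemma~\ref{T} by passing to the quotient group $\bar G = G/\langle\mbox{supp}(S)\rangle$. Write $K = \langle\mbox{supp}(S)\rangle$ and let $\varphi\colon G\to \bar G$ be the canonical epimorphism. The hypothesis $g\notin K$ for every $g\mid T$ says exactly that $\varphi(g)\neq 0$ for every term $g$ of $T$, so $\bar T := \varphi(T)$ is a sequence over $\bar G\setminus\{0\}$ with $|\bar T| = |T|$. Note also that the smallest prime divisor of $|\bar G|$ is at least $p$, since $|\bar G|$ divides $|G|$ — this is what lets us apply Lemma~\ref{T} in $\bar G$ with the same $p$.

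The main step is to establish the inequality $|\sum_0(T\bm\cdot S)| \geq |\sum_0(\bar T)|\cdot|\sum_0(S)|$. The point is that $\sum_0(T\bm\cdot S)$ is a union of cosets-worth of data indexed by $\sum_0(\bar T)$: writing $T = g_1\bm\cdot\ldots\bm\cdot g_k$, we have $\sum_0(T\bm\cdot S) = \bigl(\{g_1,0\}+\cdots+\{g_k,0\}\bigr) + \sum_0(S)$, and because $\sum_0(S)\subseteq K$ while the first summand meets each coset of $K$ that it touches, the images under $\varphi$ of the elements of $\sum_0(T\bm\cdot S)$ are precisely the elements of $\varphi(\{g_1,0\}+\cdots+\{g_k,0\}) = \sum_0(\bar T)$. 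For each such image $\bar x$, the corresponding fiber of $\sum_0(T\bm\cdot S)$ inside the coset $x + K$ contains a full translate of $\sum_0(S)$ (take any preimage realizing $\bar x$ as a subsum of $\bar T$, lift the chosen terms, and append all of $\sum_0(S)$). Hence each of the $|\sum_0(\bar T)|$ fibers has size at least $|\sum_0(S)|$, giving the product bound.

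Finally I would apply Lemma~\ref{T} to $\bar T$ over $\bar G$: in case (1), $|\bar T| = |T|\leq p-1$ gives $|\sum_0(\bar T)|\geq|\bar T|+1 = |T|+1$; in case (2), $|\bar T|\leq 2p-1$ together with $|\langle\mbox{supp}(\bar T)\rangle| = |\langle\mbox{supp}(T\bm\cdot S)\rangle/K|\geq 2p$ — here $\langle\mbox{supp}(\bar T)\rangle = \langle\mbox{supp}(T\bm\cdot S)\rangle/K$ since $\mbox{supp}(S)$ generates $K$ which is killed in the quotient — again yields $|\sum_0(\bar T)|\geq|T|+1$. Combining with the product bound gives $|\sum_0(T\bm\cdot S)|\geq(|T|+1)|\sum_0(S)|$ in both cases.

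The step I expect to require the most care is the fiber argument: one must check carefully that distinct cosets $x+K$ hit by $\{g_1,0\}+\cdots+\{g_k,0\}$ really do contribute independently — i.e. that the map $\sum_0(T\bm\cdot S)\to\sum_0(\bar T)$ is well-defined and surjective with every fiber of size $\geq|\sum_0(S)|$ — rather than collapsing, and that appending $\sum_0(S)$ genuinely stays inside a single coset (which is where $\sum_0(S)\subseteq K$ is used). Everything else is a direct translation through $\varphi$ together with the observation that $p$ remains the relevant prime bound in the quotient.
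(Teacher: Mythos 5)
Your proposal is correct and follows essentially the same route as the paper: pass to the quotient by $\langle\mbox{supp}(S)\rangle$, apply Lemma~\ref{T} to $\varphi(T)$, and combine with the product bound $|\sum_0(T\bm\cdot S)|\geq |\sum_0(\varphi(T))|\cdot|\sum_0(S)|$. The only difference is that you spell out the fiber argument for the product inequality (and the harmless point that the quotient's smallest prime divisor is at least $p$), both of which the paper leaves implicit.
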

\begin{proof} Let $\varphi: \langle \mbox{supp}(T\bm\cdot S)\rangle\rightarrow \langle \mbox{supp}(T\bm\cdot S)\rangle/\langle \mbox{supp}(S)\rangle$ be the canonical epimorphism. Let $T=g_1\bm\cdot\ldots \bm\cdot g_{k}$ and $\varphi(T)=\varphi(g_1)\bm\cdot\ldots \bm\cdot \varphi(g_{k})$. Since $g_i\notin \langle \mbox{supp}(S)\rangle$ for every $i\in[1,k]$, we have $\varphi(g_i)\neq 0$ for $i\in [1,k]$. Thus $\varphi(T)$ is a sequence over $\varphi(G)\setminus \{ \varphi(0)\}$. By Lemma~\ref{T}, we have $|\sum_0(\varphi(T))|\geq |\varphi(T)|+1=|T|+1$. Therefore, $|\sum_0(T\bm\cdot S)|\geq |\sum_0(\varphi(T))|\cdot|\sum_0(S)|\geq (|T|+1)|\sum_0(S)|$.
\end{proof}

\begin{lemma}\cite[Lemma 3.1]{GHQQZ}\label{A=2}
If $A$ is a $2$-zero-sum free subset of three elements in an abelian group, then either $|\sum_0(A)|\geq 7$ or $A$ contains some element of order two.
\end{lemma}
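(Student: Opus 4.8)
The plan is to prove the lemma by a direct inspection of the (at most eight) elements of $\sum_0(A)$. Write $A=\{a,b,c\}$; since $A$ is $2$-zero-sum free, the elements $a,b,c$ are pairwise distinct, all nonzero, and the three sums $a+b$, $a+c$, $b+c$ are all nonzero. Then
$$\sum\nolimits_0(A)=\{0,\ a,\ b,\ c,\ a+b,\ a+c,\ b+c,\ a+b+c\},$$
so $|\sum_0(A)|\le 8$, and the entire content is to show that, when $A$ has no element of order two, at most one pair of these eight sums coincides (which forces $|\sum_0(A)|\ge 7$). If $A$ does contain an element of order two there is nothing to prove.

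To keep the case analysis under control I would set $s=a+b+c$ and split the eight sums into the four ``complementary'' pairs
$$\{0,\ s\},\qquad \{a,\ b+c\},\qquad \{b,\ a+c\},\qquad \{c,\ a+b\},$$
each pair having sum $s$; equivalently, these are the images under $X\mapsto A\setminus X$ of the $2^3=8$ subsets of $A$. First I would rule out any coincidence \emph{between} two different pairs: running through the cases, every such equality simplifies, after cancelling a common term, to ``some element of $A$ equals $0$'', ``some one of $a+b$, $a+c$, $b+c$ equals $0$'', or ``two of $a,b,c$ coincide'', and each of these is excluded by the hypotheses on $A$. Hence every coincidence among the eight sums must be \emph{within} a single pair, i.e. it must be one of the four relations
$$a+b+c=0,\qquad a=b+c,\qquad b=a+c,\qquad c=a+b.$$

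The last step is to show that, if no element of $A$ has order two, at most one of these four relations can hold. Writing each of them as a relation with all coefficients $\pm 1$, namely $a+b+c=0$, $a-b-c=0$, $-a+b-c=0$, $-a-b+c=0$ (sign patterns $(1,1,1)$, $(1,-1,-1)$, $(-1,1,-1)$, $(-1,-1,1)$), one checks that the sum of any two of them has exactly one nonzero coordinate, equal to $\pm 2$; so if two of the relations held at once we would obtain $2x=0$ for some $x\in\{a,b,c\}$, an element of $A$ of order two, a contradiction. Therefore at most one of the eight sums equals another, whence $|\sum_0(A)|\ge 7$. There is no deep obstacle here: the only thing that needs care is the exhaustiveness of the list of potential coincidences, and the complementary-pair decomposition is exactly what makes that bookkeeping mechanical, since any equality $\sigma(X)=\sigma(Y)$ with $X,Y\subseteq A$ reduces, via the disjoint pieces $X\setminus Y$ and $Y\setminus X$ and the fact that $|A|=3$, either to one of the four displayed relations or to an equality already excluded.
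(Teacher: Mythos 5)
Your proof is correct and complete. The paper itself gives no argument for this lemma---it is quoted verbatim from \cite[Lemma 3.1]{GHQQZ}---so there is no in-paper proof to compare against; your write-up is a valid self-contained substitute. The complementary-pair decomposition of the eight subset sums does make the case check exhaustive: every cross-pair equality cancels to a statement excluded by the hypotheses ($0\notin A$, the elements distinct, no two distinct elements summing to zero), and the sum of any two of the four within-pair relations yields $2x=0$ for some $x\in A$, so in the absence of an order-two element at most one of the four pairs can collapse, giving $|\sum_0(A)|\ge 8-1=7$.
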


We now prove our main result.
\\

\noindent {\bf Proof of Theorem \ref{mainthm}.}
\\

By Lemma~\ref{lowerbound}, it suffices to prove $\mathsf c_0(G)\leq n/p+p-2$. Let $S$ be a regular sequence over $G$ of length $|S|=n/p+p-2$. We only need to show that $\sum(S)=G$. Since $2p\leq n_1|n_2$, we have $n/p+p-2\geq n_1+n_2-1=\mathsf D(G)$. Hence $0\in \sum(S)$ and thus $\sum(S)=\sum_0(S)$.

Assume to the contrary that $\sum\nolimits_0(S)\neq G$, that is $|\sum\nolimits_0(S)|<n$. Thus, by Lemma~\ref{stabilizer}~(1), we have ${\rm st}(\sum\nolimits_0(S))=\{0\}$.

Suppose that $\mathsf h(S)\leq 2p-2$. If $|\supp(S)|\leq 6p(p+1)$, then $|S|\leq |\supp(S)|\cdot \mathsf h(S)\leq 12p(p^2-1)$. Since $|S|=\frac{n}{p}+p-2\geq \frac{72p^6}{p}+p-2\geq 72p^5>12p(p^2-1)$, we get a contradiction. Hence $|\supp(S)|\geq 6p(p+1)+1$. By Lemma~\ref{subsetsums}, we may choose $A_1\subseteq \supp(S)\subseteq G$, such that $|A_1|\leq 6p+1$ and $|\sum_0(A_1)|\geq p|A_1|+2$. Let $t\geq 1$ be the maximal integer such that $S$ has a factorization $$S=A_1\bm\cdot\ldots\bm\cdot A_t\bm\cdot T$$ with $A_i\subseteq G$, $|A_i|\leq 6p+1$ and $|\sum_0(A_i)|\geq p|A_i|+2$ for every $i\in [1,t]$.

By the maximality of $t$ and Lemma~\ref{subsetsums}, we have $|\mbox{supp}(T)|\leq 6p(p+1)$. Thus $|T|\leq |\mbox{supp}(T)|\cdot\mathsf h(S)\leq 12p(p^2-1)$. Note that $$\sum(S)=\sum\nolimits_0(S)=\sum\nolimits_0(A_1)+\cdots+\sum\nolimits_0(A_t)+\sum\nolimits_0(T).$$ Since $|A_i|\leq 6p+1$ for each $i\in[1,t]$, we have $t\geq \frac{|S|-|T|}{6p+1}$. By Lemma~\ref{stabilizer}~(2), $|\sum\nolimits_0(T)|\geq |T|+1$. Since ${\rm st}(\sum(S))=\{0\}$, by Lemma~\ref{Kneser}, we have
\begin{align}
|\sum\nolimits_0(S)|=&|\sum\nolimits_0(A_1)+\cdots+\sum\nolimits_0(A_t)+\sum\nolimits_0(T)|\notag \\
\geq &(|\sum\nolimits_0(A_1)|-1)+\cdots+(|\sum\nolimits_0(A_t)|-1)+|\sum\nolimits_0(T)|\notag \\
\geq &\sum_{i=1}^t(p|A_i|+1)+|T|+1\notag \\
=&p(|S|-|T|)+t+|T|+1\notag \\
\label{calculation}\geq &(p+\frac{1}{6p+1})(|S|-|T|)+|T|+1\\
\geq &n+\frac{n}{p(6p+1)}+p^2-2p-(p-\frac{6p}{6p+1})|T|+1\  (\mbox{as }|S|=\frac{n}{p}+p-2)\notag \\
\geq &n+\frac{n}{p(6p+1)}+p^2-2p-\frac{12p^2(6p-5)(p^2-1)}{6p+1}+1\  (\mbox{as }|T|\leq 12p(p^2-1))\notag \\
\geq &n+\frac{n}{p(6p+1)}-\frac{12p^2\cdot6p\cdot p^2}{6p+1}\notag \\
\geq &n   \ \ \ \ \ \ \ \ \ (\mbox{as } n\geq 72p^6), \notag
\end{align}
yielding a contradiction. Therefore, we must have $\mathsf h(S)\geq 2p-1$. We choose $g_1\in \mbox{supp}(S)$ such that $$|S_{\langle g_1\rangle}|=\mbox{max}\{|S_{\langle g\rangle}|: g\in \mbox{supp}(S) \mbox{ and } \mathsf v_g(S)\geq 2p-1\}.$$ Let $S_1=S_{\langle g_1\rangle}$ and $\lambda$ be the maximal integer such that $S$ has a factorization $$S=(T_1\bm\cdot S_1)\bm\cdot (T_2\bm\cdot S_2)\bm\cdot \ldots \bm\cdot (T_{\lambda}\bm\cdot S_{\lambda})\bm\cdot S'$$ with the following properties:
\begin{itemize}
\item[(1)] $|T_i|=2p-1$ for each $i\in [1, \lambda]$;
\item[(2)] for each $i\in [1, \lambda]$, $\mathsf v_{g_i}(S_i)\geq 2p-1$ for some $g_i\in G$; moreover, $S_i$ contains all terms from $S\bm\cdot((T_1\bm\cdot S_1)\bm\cdot (T_2\bm\cdot S_2)\bm\cdot \ldots \bm\cdot (T_{i-1}\bm\cdot S_{i-1}))^{[-1]}$ contained in $\langle g_i\rangle$;
\item[(3)] $|\sum_0(T_i\bm\cdot S_i')|\geq 2p(|S_i'|+1)$ for all subsequences $S_i'|S_i$, where $i\in [1, \lambda]$.
\end{itemize}

Clearly, $S_1$ satisfies Property $(2)$.

\noindent {\bf Claim.}
$$\lambda\geq 1, \ \ \ \mbox{i.e., $T_1$ always exists}.$$
We first show that $\langle \mbox{supp}(S)\rangle=G$. For otherwise, if $H=\langle \mbox{supp}(S)\rangle<G$, then
$|H|\leq \frac{n}{p}$ where $p$ is the smallest prime divisor of $|G|$. Since $S$ is regular, we have $|H|-1\geq |S_H|=|S|=\frac{n}{p}+p-2\geq |H|$, yielding a contradiction. Thus $\langle \mbox{supp}(S)\rangle=G$. It follows that
$$
G/\langle \mbox{supp}(S_1)\rangle=\langle \mbox{supp}(S\bm\cdot S_1^{[-1]}), \mbox{supp}(S_1)\rangle/\langle \mbox{supp}(S_1)\rangle.
$$

Note that
$$
|S\bm\cdot S_1^{[-1]}|\geq \frac{n}{p}+p-2-\frac{n}{p^2}\geq 2p.
$$
Next we choose a suitable subsequence $T_1|S\bm\cdot S_1^{[-1]}$. If $G/\langle \mbox{supp}(S_1)\rangle$ is a cyclic group of prime order, then let $T_1$ be an arbitrary subsequence of $S\bm\cdot S_1^{[-1]}$ with length $|T_1|=2p-1$. Since $\langle\supp(T_1), \supp(S_1) \rangle/\langle \mbox{supp}(S_1)\rangle$ is not trivial, we must have \\
$\langle\supp(T_1), \supp(S_1) \rangle/\langle \mbox{supp}(S_1)\rangle=G/\langle \mbox{supp}(S_1)\rangle$. Therefore,
$$
|\langle\supp(T_1), \supp(S_1) \rangle/\langle \mbox{supp}(S_1)\rangle|=|G/\langle \mbox{supp}(S_1)\rangle|\geq n_1\geq 2p,
$$
as $|\langle \mbox{supp}(S_1)\rangle|\leq |\langle g_1\rangle|\leq n_2$. Next we assume that $|G/\langle \mbox{supp}(S_1)\rangle|$ is a composite number. Since $$G/\langle \mbox{supp}(S_1)\rangle=\langle \mbox{supp}(S\bm\cdot S_1^{[-1]}), \mbox{supp}(S_1)\rangle/\langle \mbox{supp}(S_1)\rangle,$$ there are two elements $h_1, h_2\in
\mbox{supp}(S\bm\cdot S_1^{[-1]})$ such that $|\langle h_1,h_2, \supp(S_1) \rangle /\langle \mbox{supp}(S_1)\rangle|$ is a composite number. It follows that
$$
|\langle h_1,h_2, \supp(S_1) \rangle /\langle \mbox{supp}(S_1)\rangle|\geq 2p,
$$
as $p$ is the smallest prime divisor of $n$. Let $T_1$ be a subsequence of $S\bm\cdot S_1^{[-1]}$ with $h_1\bm\cdot h_2|T_1$. Then,
$$
|\langle\supp(T_1), \supp(S_1) \rangle/\langle \mbox{supp}(S_1)\rangle|\geq 2p.
$$

So we can always choose a subsequence $T_1|S\bm\cdot S_1^{[-1]}$ such that \\
$|\langle\supp(T_1), \supp(S_1) \rangle/\langle \mbox{supp}(S_1)\rangle|\geq 2p$ and $|T_1|=2p-1$.

Finally, we verify Property $(3)$ holds. Let $S_1'|S_1$ be a subsequence of $S_1$. Then $\langle \mbox{supp}(S_1')\rangle\leq \langle\supp(S_1)\rangle$ and thus  \\
$|\langle\supp(T_1), \supp(S_1') \rangle/\langle \mbox{supp}(S_1')|\geq |\langle\supp(T_1), \supp(S_1) \rangle/\langle \mbox{supp}(S_1)\rangle|\geq 2p$.\\ Note that $g\notin \langle \mbox{supp}(S_1)\rangle$ for every $g|T_1$. Thus $g\notin \langle \mbox{supp}(S_1')\rangle$ for every $g|T_1$. By Corollary~\ref{coset}~(2) and Lemma~\ref{stabilizer}~(2), we have $|\sum_0(T_1\bm\cdot S_1')|\geq (|T_1|+1)|\sum_0(S_1')|\geq 2p(|S_1'|+1)$, implying Property $(3)$. In summary, we have found a subsequence $T_1$ (together with $S_1$) satisfying all three properties. This completes the proof of {\bf Claim}.

Next we distinguish the remaining proof into the following two cases according to the value of $\mathsf h(S')$.
\\

\noindent {\bf Case 1.} $\mathsf h(S')\leq 2p-2$.

Let $t\geq 0$ be the maximal integer such that $S'$ has a factorization $$S'=A_1\bm\cdot\ldots\bm\cdot A_t\bm\cdot T$$ with $A_i\subseteq G$, $|A_i|\leq 6p+1$ and $|\sum_0(A_i)|\geq p|A_i|+2$. By the maximality of $t$ and Lemma~\ref{subsetsums}, we have $|\mbox{supp}(T)|\leq 6p(p+1)$. Thus $|T|\leq |\mbox{supp}(T)|\cdot\mathsf h(S')\leq 12p(p^2-1)$. Since $|T_i\bm\cdot S_i|\geq 4p-2$ for each $i\in[1,\lambda]$, we have $\lambda\leq \frac{|S|-|S'|}{4p-2}$. Similarly, $t\geq \frac{|S'|-|T|}{6p+1}$. By Lemma~\ref{stabilizer}~(2), $|\sum\nolimits_0(T)|\geq |T|+1$. Since ${\rm st}(\sum\nolimits_0(S))=\{0\}$, by Lemma~\ref{Kneser}, we have
\begin{align*}
|\sum\nolimits_0(S)|=&|\sum_{i=1}^{\lambda}\sum\nolimits_0(T_i\bm\cdot S_i)+\sum_{j=1}^{t}\sum\nolimits_0(A_j)+ \sum\nolimits_0(T)|\\
\geq &\sum_{i=1}^{\lambda}(|\sum\nolimits_0(T_i\bm\cdot S_i)|-1)+\sum_{j=1}^{t}(|\sum\nolimits_0(A_j)|-1)+ |\sum\nolimits_0(T)|\\
\geq &\sum_{i=1}^{\lambda}(2p|S_i|+2p-1)+\sum_{j=1}^{t}(p|A_j|+1)+|T|+1\\
= &\sum_{i=1}^{\lambda}(2p|T_i\bm\cdot S_i|-(2p-1)^2)+p(|S'|-|T|)+t+|T|+1 \\
\geq &2p(|S|-|S'|)-(2p-1)^2\lambda+(p+\frac{1}{6p+1})(|S'|-|T|)+|T|+1\\
\geq &(p+1/2)(|S|-|S'|)+(p+\frac{1}{6p+1})(|S'|-|T|)+|T|+1\\
\geq &(p+\frac{1}{6p+1})(|S|-|T|)+|T|+1\\
\geq &n \ \ \ \ \ \ \ \ (\mbox{as } |T|\leq 12p(p^2-1)),
\end{align*}
yielding a contradiction. We note that the last inequality was obtained by using a similar calculation as in \eqref{calculation}.
\\

\noindent {\bf Case 2.} $\mathsf h(S')\geq 2p-1$.

There exists $g_{\lambda +1}\in G$ such that $\mathsf v_{g_{\lambda +1}}(S')=\mathsf h(S')\geq 2p-1$. Let $S_{\lambda+1}=S_{\langle g_{\lambda +1}\rangle}'$ and $U=S'\bm\cdot S_{\lambda+1}^{[-1]}$. Then $$S=(T_1\bm\cdot S_1)\bm\cdot (T_2\bm\cdot S_2)\bm\cdot \ldots \bm\cdot (T_{\lambda}\bm\cdot S_{\lambda})\bm\cdot S_{\lambda+1} \bm\cdot U.$$ We next distinguish the rest proof into the following two subcases.
\\

\noindent {\bf Subcase 2.1.} $|U|\leq 2p-2$.

By the choice of $S_1$, we have $|S_{\lambda+1}|\leq |S_1|$. Since $|T_i\bm\cdot S_i|\geq 4p-2$ for each $i\in[2,\lambda]$, we have $\lambda-1\leq \frac{|S|-|S_{1}\bm\cdot T_1|-|S_{\lambda+1}|-|U|}{4p-2}$. By Lemma~\ref{stabilizer}~(2), $|\sum\nolimits_0(S_{\lambda+1}\bm\cdot U)|\geq |S_{\lambda+1}\bm\cdot U|+1$. Since ${\rm st}(\sum_0(S))=\{0\}$, by Lemma~\ref{Kneser}, we have
\begin{align*}
|\sum\nolimits_0(S)|=&|\sum\nolimits_0(T_1\bm\cdot S_1)+\sum_{i=2}^{\lambda}\sum\nolimits_0(T_i\bm\cdot S_i)+ \sum\nolimits_0(S_{\lambda+1}\bm\cdot U)|\\
\geq &(|\sum\nolimits_0(T_1\bm\cdot S_1)|-1)+\sum_{i=2}^{\lambda}(|\sum\nolimits_0(T_i\bm\cdot S_i)|-1)+|\sum\nolimits_0(S_{\lambda+1}\bm\cdot U)|\\
\geq &(2p|S_1|+2p-1)+\sum_{i=2}^{\lambda}(2p|S_i|+2p-1)+|S_{\lambda+1}\bm\cdot U|+1\\
\geq &(p+1/2)|S_1|+\sum_{i=2}^{\lambda}(2p|T_i\bm\cdot S_i|-(2p-1)^2)+(p+1/2)|S_{\lambda+1}|+2p\\
= &(p+1/2)(|S_1|+|S_{\lambda+1}|)+2p(|S|-|T_1\bm\cdot S_{1}|-|S_{\lambda+1}|-|U|)-(2p-1)^2({\lambda}-1)+2p\\
\geq &(p+1/2)(|S_1|+|S_{\lambda+1}|)+(p+1/2)(|S|-|T_1\bm\cdot S_{1}|-|S_{\lambda+1}|-|U|)+2p\\
= &(p+1/2)(|S|-|T_1|-|U|)+2p\\
\geq &(p+1/2)(n/p+p-2-(4p-3))+2p\\
\geq &n  \ \ \ \ \ \ \ \ \ (\mbox{as } n\geq 6p^3-3p^2-p),
\end{align*}
yielding a contradiction.
\\

\noindent {\bf Subcase 2.2.} $|U|\geq 2p-1$.

Let $H=\langle \mbox{supp}(U), g_{\lambda+1}\rangle$. If $|H/\langle g_{\lambda+1}\rangle|\geq 2p$, then by a similar method as used in the proof of {\bf Claim}, we can find a subsequence $T_{\lambda+1}|U$ with $|T_{\lambda+1}|=2p-1$ such that $S_{\lambda+1}$ and $T_{\lambda+1}$ satisfy Properties $(1)-(3)$, yielding a contradiction to the maximality of $\lambda$. Therefore, we must have $|H/\langle g_{\lambda+1}\rangle|\leq 2p-1$. Since $\langle g_{\lambda+1}\rangle$ is a cyclic subgroup of $G$, we have $|G/\langle g_{\lambda+1}\rangle|\geq n_1\geq 2p$. Thus $H\neq G$. Therefore, $$|H|\leq n/p\mbox{ \ \ \ \ and \ \ \ \ } |S'|\leq |S_H|\leq n/p-1 \ (\mbox{as } S'| S_H \mbox{ and }S \mbox{ is regular}).$$

Suppose that $\mbox{supp}(S_i)\nsubseteq H$ for some $i\in [1, \lambda]$. Then let $S_i'=S_i\bm\cdot g_i^{[-(p-1)]}$. We have $$S=\left(\prod_{1\leq j\neq i\leq \lambda}(T_j\bm\cdot S_j)\right)\bm\cdot (T_i\bm\cdot S_i')\bm\cdot (g_i^{[p-1]}\bm\cdot S').$$
Let $U'|U$ with $|U'|=p-1$. By Corollary~\ref{coset}~(1) and Lemma~\ref{stabilizer}~(2), $|\sum_0(U'\bm\cdot g_{\lambda+1}^{[2p-1]})|\geq (|U'|+1)|\sum_0(g_{\lambda+1}^{[2p-1]})|= 2p^2$. By Lemma~\ref{stabilizer}~(1), ${\rm st}(\sum_0(S'))=\{0\}$. Note that $$\sum\nolimits_0(S')=\sum\nolimits_0(S'\bm\cdot (U'\bm\cdot g_{\lambda+1}^{[2p-1]})^{[-1]})+\sum\nolimits_0(U'\bm\cdot g_{\lambda+1}^{[2p-1]}).$$
By Lemma~\ref{Kneser} and Lemma~\ref{stabilizer}~(2), $|\sum_0(S')|\geq |\sum_0(S'\bm\cdot (U'\bm\cdot g_{\lambda+1}^{[2p-1]})^{[-1]})|+|\sum_0(U'\bm\cdot g_{\lambda+1}^{[2p-1]})|-1\geq |S'\bm\cdot (U'\bm\cdot g_{\lambda+1}^{[2p-1]})^{[-1]}|+2p^2=|S'|+2p^2-3p+2$. Therefore, by Corollary~\ref{coset}~(1), $$|\sum\nolimits_0(g_i^{[p-1]}\bm\cdot S')|\geq(|g_i^{[p-1]}|+1)|\sum\nolimits_0(S')|\geq p|\sum\nolimits_0(S')|\geq p|S'|+2p^3-3p^2+2p.$$ As in Subcase 2.1, we have $\lambda-1\leq \frac{|S|-|T_i\bm\cdot S_i|-|S'|}{4p-2}$. Since ${\rm st}(\sum_0(S))=\{0\}$, by Lemma~\ref{Kneser},
\begin{align*}
|\sum\nolimits_0(S)|=&|\sum_{1\leq j\neq i\leq \lambda}\sum\nolimits_0(T_j\bm\cdot S_j)+\sum\nolimits_0(T_i\bm\cdot S_i')+ \sum\nolimits_0(g_i^{[p-1]}\bm\cdot S')|\\
\geq &\sum_{1\leq j\neq i\leq \lambda}(|\sum\nolimits_0(T_j\bm\cdot S_j)|-1)+(|\sum\nolimits_0(T_i\bm\cdot S_i')|-1)+|\sum\nolimits_0(g_i^{[p-1]}\bm\cdot S')|\\
\geq &\sum_{1\leq j\neq i\leq \lambda}(2p|T_j\bm\cdot S_j|-(2p-1)^2)+(2p|S_i'|+2p-1)+p|S'|+2p^3-3p^2+2p\\
= &2p(|S|-|S_i\bm\cdot T_i|-|S'|)-(2p-1)^2(\lambda-1)+2p(|S_i|-p+1)\\
  &+p|S'|+2p^3-3p^2+4p-1\\
\geq &(p+1/2)(|S|-|S_i\bm\cdot T_i|-|S'|)+2p|S_i|+p|S'|+2p^3-5p^2+6p-1\\
= &(p+1/2)|S|+(p-1/2)|S_i|-(p+1/2)|T_i|-|S'|/2+2p^3-5p^2+6p-1\\
\geq &(p+1/2)(n/p+p-2)-(2p-1)-(n/p-1)/2+2p^3-5p^2+6p-1\\
\geq &n,
\end{align*}
yielding a contradiction.

Next assume that $\mbox{supp}(S_i)\subseteq H$ for every $i\in [1, \lambda]$. Let $$S''=S_1\bm\cdot \ldots \bm\cdot S_{\lambda}\bm\cdot S'.$$
Then $S''|S_H$. Let
$$n'=|H|\leq n/p.$$

 Since $S''=S_1\bm\cdot \ldots \bm\cdot S_{\lambda}\bm\cdot S'$ with $|S_i|\geq |T_i|$ for $i\in [1, \lambda]$, we have $|S''|\geq (|S|+|S'|)/2\geq ((n/p+p-2)+|U|+|S_{\lambda+1}|)/2\geq (n'+|U|+|S_{\lambda+1}|+1)/2$.

Let $T=h_1\bm\cdot h_2 \bm\cdot S_{\lambda+1}$ where $h_1\bm\cdot h_2|U$. Let $t\geq 0$ be the maximal integer such that $S''$ has a new factorization
$$S''=A_1\bm\cdot \ldots \bm\cdot A_t\bm\cdot W\bm\cdot T$$
where each $A_i$ is a $2$-zero-sum free $3$-subset of $G$, and $W$ is a subsequence of $S''$ which contains no $2$-zero-sum free $3$-subset of $G$. Then
\begin{equation}
3t+|W|+|S_{\lambda+1}|+2=|S''|\geq (n'+|U|+|S_{\lambda+1}|+1)/2 \label{S''}
\end{equation}
and $$\mbox{supp}(W)\subseteq \{u_1,-u_1,u_2,-u_2\}$$
for some distinct elements $u_1,u_2\in H$. By Lemma~\ref{A=2}, $|\sum_0(A_i)|\geq 7$ for $i\in [1,t]$.

We now show that $|W|>|U|$. Since $S''\subseteq H$ and $|H|=n'$, we must have
$$\sum\nolimits_0(S'')\subsetneq H, \mbox{\ \ \ and thus\ \ \ }|\sum\nolimits_0(S'')|\leq n'-1.$$
For otherwise, $\sum_0(S'')=H$ and thus ${\rm st}(\sum_0(S''))\supseteq H\neq \{0\}$. However, by Lemma~\ref{stabilizer}~(1), ${\rm st}(\sum_0(S''))=\{0\}$, yielding a contradiction. By Corollary~\ref{coset}~(1) and Lemma~\ref{stabilizer}~(2), $|\sum\nolimits_0(T)|\geq 3(|S_{\lambda+1}|+1)$. It follows from Lemma~\ref{Kneser} and \ref{stabilizer}~(2) that
\begin{align*}
n'-1\geq &|\sum\nolimits_0(S'')|=|\sum_{i=1}^t\sum\nolimits_0(A_i)+\sum\nolimits_0(W)+ \sum\nolimits_0(T)|\\
\geq &\sum_{i=1}^t(|\sum\nolimits_0(A_i)|-1)+(|\sum\nolimits_0(W)|-1)+|\sum\nolimits_0(T)|\\
\geq &\sum_{i=1}^t6+|W|+3(|S_{\lambda+1}|+1)\\
= &6t+|W|+3|S_{\lambda+1}|+3\\
= &2|S''|-|W|+|S_{\lambda+1}|-1       \\
\geq &n'+|U|+2|S_{\lambda+1}|-|W| \  \ \ \ \ \ \ \ (\mbox{by } \eqref{S''}).
\end{align*}
This gives $$|W|\geq |U|+2|S_{\lambda+1}|+1>|U|.$$
Note that $W|S''\bm\cdot S_{\lambda +1}^{[-1]}=S_1\bm\cdot \ldots \bm\cdot S_{\lambda}\bm\cdot U$. In view of Property (2), if $u_1$ lies in $\supp(S_j)$ for some $j\leq \lambda+1$, then $\mathsf v_{\pm u_1}(S'')=\mathsf v_{\pm u_1}(S_j)$. Therefore, $u_1\in \supp(U)$ if and only if $\mathsf v_{\pm u_1}(U)=\mathsf v_{\pm u_1}(S'')$, meaning all terms equal to $u_1$ or $-u_1$ in $S''$ occur in $U$ provided that $u_1\in \supp(U)$; and likewise for $-u_1$, $u_2$ and $-u_2$. Since $W|S''$ and $\mbox{supp}(W)\subseteq \{u_1,-u_1,u_2,-u_2\}$, we have $W|u_1^{\mathsf v_{u_1}(S'')}\bm\cdot (-u_1)^{\mathsf v_{-u_1}(S'')}\bm\cdot u_2^{\mathsf v_{u_2}(S'')}\bm\cdot (-u_2)^{\mathsf v_{-u_2}(S'')}$. If $u_1\bm\cdot u_2|W$ and $u_1\bm\cdot u_2|U$, then by what we just proved, $u_1^{\mathsf v_{u_1}(S'')}\bm\cdot (-u_1)^{\mathsf v_{-u_1}(S'')}\bm\cdot u_2^{\mathsf v_{u_2}(S'')}\bm\cdot (-u_2)^{\mathsf v_{-u_2}(S'')}|U$, so $W|U$, implying $|W|\leq |U|$, yielding a contradiction to $|W|> |U|$. Therefore, without loss of generality, we may assume that $u_1|W$ and $u_1\notin \supp(U)$. Thus $u_1\in \mbox{supp}(S_j)$ for some $j\leq \lambda+1$. Since the sequence $W$ is disjoint from $S_{\lambda+1}$ by construction, it follows that $u_1\in \supp(S_j)$ for some $j\leq \lambda$. In view of Property (2), we conclude that $-u_1\in \mbox{supp}(S_j)$ and $v\notin \langle u_1\rangle\leq \langle g_j\rangle$ for every $v\in \mbox{supp}(S_{\lambda+1})$. Write $W=W_1\bm\cdot W_2$ with
$$W_1|u_1^{[\mathsf v_{u_1}(S'')]}\bm\cdot (-u_1)^{[\mathsf v_{-u_1}(S'')]}\mbox{ \ \ and \ \ }W_2|u_2^{[\mathsf v_{u_2}(S'')]}\bm\cdot (-u_2)^{[\mathsf v_{-u_2}(S'')]}.$$
Now fix $v|S_{\lambda+1}$. Let $V=W_1\bm\cdot v$ and $T'=T\bm\cdot v^{[-1]}$. We obtain another factorization $$S''=A_1\bm\cdot \ldots \bm\cdot A_t\bm\cdot V \bm\cdot W_2 \bm\cdot T'.$$
Since $v\notin \langle u_1\rangle$, by Corollary~\ref{coset}~(1) and Lemma~\ref{stabilizer}~(2), $|\sum\nolimits_0(V)|\geq 2(|W_1|+1)$. Note that
\begin{equation}
|S''|=3t+|V|+|W_2|+|T|-1=3t+|W_1|+|W_2|+|S_{\lambda+1}|+2. \label{W_1}
\end{equation}
As before, we obtain

\begin{align*}
n'-1\geq &|\sum\nolimits_0(S'')|=|\sum_{i=1}^t\sum\nolimits_0(A_i)+\sum\nolimits_0(V)+\sum\nolimits_0(W_2)+ \sum\nolimits_0(T')|\\
\geq &\sum_{i=1}^t(|\sum\nolimits_0(A_i)|-1)+(|\sum\nolimits_0(V)|-1)+(|\sum\nolimits_0(W_2)|-1)+|\sum\nolimits_0(T')|\\
\geq &\sum_{i=1}^t6+2|W_1|+1+|W_2|+3|S_{\lambda+1}|\\
= &6t+2|W_1|+|W_2|+3|S_{\lambda+1}|+1\\
= &2|S''|-|W_2|+|S_{\lambda+1}|-3     \ \ \ \ \ \ \ (\mbox{by } \eqref{W_1})\\
\geq &n'+|U|+2|S_{\lambda+1}|-|W_2|-2 \  \ \ \ \ \ \ \ (\mbox{by } \eqref{S''}),
\end{align*}
implying $$|W_2|\geq |U|+2|S_{\lambda+1}|-1>|U|.$$
Next we consider $W_2$. Since $\supp(W_2)\subseteq\{u_2,-u_2\}$, if $u_2\in \supp(U)$, then as above, we obtain $W_2|U$, implying $|W_2|\leq |U|$, giving a contradiction. Thus $u_2\notin \supp(U)$. As above, we conclude that $\{\pm u_2\}\subseteq \mbox{supp}(S_{j'})$ for some $j'\leq \lambda$ and $z\notin \langle u_2\rangle\leq \langle g_{j'}\rangle$ for every $z\in \mbox{supp}(S_{\lambda+1})$. Since $|S_{\lambda+1}|\geq 2p-1\geq 5$, we can choose $z|S_{\lambda+1}\bm\cdot v^{[-1]}$. Define $V=W_1\bm\cdot v$, $Z=W_2\bm\cdot z$ and $T''=T\bm\cdot v^{[-1]}\bm\cdot z^{[-1]}$. Thus we obtain another factorization $$S''=A_1\bm\cdot \ldots \bm\cdot A_t\bm\cdot V \bm\cdot Z \bm\cdot T''.$$
As before, we obtain
\begin{align*}
n'-1\geq &|\sum\nolimits_0(S'')|=|\sum_{i=1}^t\sum\nolimits_0(A_i)+\sum\nolimits_0(V)+\sum\nolimits_0(Z)+ \sum\nolimits_0(T'')|\\
\geq &\sum_{i=1}^t(|\sum\nolimits_0(A_i)|-1)+(|\sum\nolimits_0(V)|-1)+(|\sum\nolimits_0(Z)|-1)+|\sum\nolimits_0(T'')|\\
\geq &\sum_{i=1}^t6+2|W_1|+1+2|W_2|+1+3(|S_{\lambda+1}|-1)\\
= &6t+2|W_1|+2|W_2|+3|S_{\lambda+1}|-1\\
= &2|S''|+|S_{\lambda+1}|-5      \ \ \ \ \ \ \ (\mbox{by } \eqref{W_1})\\
\geq &n'+|U|+2|S_{\lambda+1}|-4  \  \ \ \ \ \ \ \ (\mbox{by } \eqref{S''})\\
\geq &n' \ \ \ \ \ \ (\mbox{as } |U|\geq 2p-1\geq 5),
\end{align*}
yielding a contradiction. In all cases we have found contradictions. Thus we must have $\sum(S)=G$. This completes the proof.         \qed
\\

We end this paper with the following remark.
\begin{remark}
It was proved that $f(6)=19$ in  \cite[Theorem 1.2]{GLPS}. Thus Lemma~\ref{subsetsums} can be improved as follows: if $A\subseteq G\setminus\{0\}$ is a subset with $|A|\geq 21$, then there is a subset $B\subset A$ such that $|\sum_0(B)|\geq 3|B|+2$ with $|B|\leq 6$. Therefore, when $p=3$, we may relax the condition $|G|\geq 72p^6=52488$ to $|G|\geq 3045$, and the same proof as in Theorem~\ref{mainthm} still works.
\end{remark}

\noindent {\bf Acknowledgements}. Part of this work was carried out during a visit of the first author to Brock University as an international visiting scholar. He would like to gratefully thank the host institution for its hospitality and for providing an excellent atmosphere for research. This work was supported in part by the National Natural Science Foundation of China (No. 11701256, 11871258, 12071344), the Youth Backbone Teacher Foundation of Henan's University (No. 2019GGJS196), the China Scholarship Council (Grant No. 201908410132), and it was also supported in part by a Discovery Grant from the Natural Sciences and Engineering Research Council of Canada (Grant No. RGPIN 2017-03903).

\end{document}